\newtheorem{theorem}{Theorem}[section]
\newtheorem{proposition}[theorem]{Proposition}
\theoremstyle{definition}
\newtheorem{definition}[theorem]{Definition}
\newtheorem{remark}[theorem]{Remark}
\numberwithin{equation}{section}
\newcommand{\CC}{\mathbb C}
\newcommand{\HH}{\mathbb H}
\newcommand{\QQ}{\mathbb Q}
\newcommand{\RR}{\mathbb R}
\newcommand{\ZZ}{\mathbb Z}
\newcommand{\SL}{\mathop{\mathrm {SL}}\nolimits}
\newcommand{\Orth}{\mathop{\null\mathrm {O}}\nolimits}
\newcommand{\latt}[1]{{\langle{#1}\rangle}}
\newcommand{\w}{\operatorname{w}}
\begin{document}

\title[Weyl invariant  Jacobi forms: a new approach]{Weyl invariant  Jacobi forms: a new approach}

\author{Haowu Wang}

\address{Max-Planck-Institut f\"{u}r Mathematik, Vivatsgasse 7, 53111 Bonn, Germany}

\email{haowu.wangmath@gmail.com}

\subjclass[2010]{11F50, 17B22}

\date{\today}

\keywords{Jacobi forms, root systems, Weyl groups, invariant theory}

\begin{abstract}
The weak Jacobi forms of integral weight and integral index associated to an even positive definite lattice form a bigraded algebra. In this paper we prove a criterion for this type of algebra being free. As an application, we give an automorphic proof of K. Wirthm\"{u}ller's theorem which asserts that the bigraded algebra of weak Jacobi forms invariant under the Weyl group is a polynomial algebra for any irreducible root system not of type $E_8$. This approach is also applicable to $E_8$. Even if the algebra of $E_8$ Jacobi forms is known to be non-free, we still derive a new structure result.
\end{abstract}

\maketitle

\section{Introduction}
In 1992,  K. Wirthm\"{u}ller \cite{Wir92} studied the invariant theory of root systems with respect to the actions of the Jacobi group and the Weyl group. More precisely, for any irreducible root system $R$ of rank $l$, he defined the so-called Weyl invariant weak Jacobi forms for $R$. These forms are holomorphic functions in many variables which are modular in the complex upper half-plane $\HH$ and are double quasi-periodic in the lattice variable $\mathfrak{z}\in R\otimes\CC$, and are invariant with respect to the action of the Weyl group $W(R)$ on $\mathfrak{z}$. Weyl invariant Jacobi forms have many applications in Frobenius manifold, Gromov-Witten theory and string theory (see \cite{Ber00a, Ber00b, OP19, Sak17, Sak19, ZGH+18}). All such Jacobi forms form a bigraded algebra over $\CC$, graded by the weight and index. 
K. Wirthm\"{u}ller identified these forms with global sections of a selective reflexive sheaf on a certain abelian variety related to the root system.  By means of the theory of algebraic geometry, he successfully proved that the bigraded algebra is freely generated by $l+1$ basic Jacobi forms over the ring $M_*(\SL_2(\ZZ))$ of usual $\SL_2(\ZZ)$-modular forms when $R$ is not of type $E_8$. The weak Jacobi forms of even weight in the sense of M. Eichler and D. Zagier \cite{EZ85} are exactly the Weyl invariant Jacobi forms of type $A_1$. This is why these invariants introduced by K. Wirthm\"{u}ller are called Weyl invariant Jacobi forms. 

Unlike the classical case of $A_1$, K. Wirthm\"{u}ller did not give an explicit construction of generators and his proof is arcane. Due to this defect and the importance of these Jacobi forms, some mathematicians were still studying this problem in the past 30 years. Now, all types of generators have been constructed explicitly in the literature. The generators of types $A_n$, $B_n$, $G_2$ and $D_4$ can be found in \cite{Ber00a, Ber00b}. The generators of type $E_n$ were given in \cite{Sat98, Sak17, Sak19}. The generators of types $D_n$, $C_n$ and $F_4$ were  constructed in \cite{AG19, Adl20}. Besides, K. Wirthm\"{u}ller's theorem can also be reproved using the pull-back trick for all root systems except $E_6$ and $E_7$. We refer to  \cite{AG19} for a detailed proof of the tower of $D_n$. Since the root systems $E_6$ and $E_7$ are exceptional, the pull-back technique does not work.

In this paper, we introduce a new approach to recover  K. Wirthm\"{u}ller's theorem for all root systems. In \cite{Wan20}, we proved a necessary and sufficient condition for the graded algebra of orthogonal modular forms being free. This condition is based on the differential operators introduced in \cite{AI05} and the existence of a remarkable modular form which vanishes precisely on some mirrors of reflections with multiplicity one. We use a similar strategy to investigate Jacobi forms. We first define a differential operator on Jacobi forms, which can be regarded as the Jacobian of  Jacobi forms (see Proposition \ref{prop:Jacobian}). We then construct some weak Jacobi forms $\Phi_R$ which have special divisors associated to roots of root systems and are anti-invariant under the Weyl group. These forms are constructed as theta blocks, namely the product of Jacobi theta functions divided by a power of Dedekind $\eta$-function (see \cite{GSZ19}). We observe that for each root system not of type $E_8$ the Jacobian of free generators is equal to this particular theta block. Inspired by this observation, we establish a sufficient condition for the bigraded algebra of weak Jacobi forms being free (see Proposition \ref{prop:criterion}). This gives a simple proof of K. Wirthm\"{u}ller's theorem.

This method also works for $E_8$. For the root system $E_8$, we proved in \cite{Wan18} that the space of weak Jacobi forms is not a polynomial algebra and each $E_8$ Jacobi form can be written uniquely as a polynomial in nine algebraically independent holomorphic Jacobi forms introduced by K. Sakai \cite{Sak17} with coefficients which are meromorphic $\SL_2(\ZZ)$ modular forms. We still have the theta block $\Phi_{E_8}$ satisfying the properties mentioned above. But there are no algebraically independent weak Jacobi forms of type $E_8$ whose  Jacobian has the same weight and index as $\Phi_{E_8}$. Fortunately, there are indeed algebraically independent weak $E_8$ Jacobi forms whose Jacobian has the same index as $\Phi_{E_8}$. K. Sakai's Jacobi forms satisfy such property. We conclude from this fact that the coefficients in the unique polynomial expression of a $E_8$ Jacobi form of index $m$ can be represented as the quotients of some $\SL_2(\ZZ)$ modular forms by $g^{m-1}$, where $g$ is a $\SL_2(\ZZ)$ modular form defined as the quotient of the Jacobian of Sakai's forms by $\Phi_{E_8}$ (see Theorem \ref{th:E8}).

The layout of this paper is as follows. In \S \ref{sec:Jacobian} we define the Jacobian of Jacobi forms and establish the sufficient condition.  We introduce K. Wirthm\"{u}ller's theorem in \S \ref{sec:Wirthmuller} and present a new proof in \S \ref{sec:proof}. In \S \ref{sec:E8} we prove the structure result of $E_8$ Jacobi forms.

\section{The Jacobian of Jacobi forms}\label{sec:Jacobian}
Let $L$ be an even positive definite lattice of rank $l$ with bilinear form $\latt{\cdot,\cdot}$ and dual lattice $L^*$. Let $G$ be a subgroup of the integral orthogonal group $\Orth(L)$ of $L$. One defines Jacobi forms of lattice index $L$ and invariant under the group $G$, which are a generalization of classical Jacobi forms introduced by M. Eichler and D. Zagier \cite{EZ85}.

\begin{definition}\label{def:JFs}
Let $k\in \ZZ$ and $t\in \ZZ_{\geq 0}$. If a holomorphic function $\varphi : \HH \times (L \otimes \CC) \rightarrow \CC$ satisfies the following transformation laws
\begin{align*}
&\varphi \left( \frac{a\tau +b}{c\tau + d},\frac{\mathfrak{z}}{c\tau + d} \right) = (c\tau + d)^k \exp\left( t\pi i \frac{c\latt{\mathfrak{z},\mathfrak{z}}}{c \tau + d}\right) \varphi ( \tau, \mathfrak{z} ), \quad \left( \begin{array}{cc}
a & b \\ 
c & d
\end{array} \right)   \in \SL_2(\ZZ),\\
&\varphi (\tau, \mathfrak{z}+ x \tau + y)= \exp\left(-t\pi i [ \latt{x,x}\tau +2\latt{x,\mathfrak{z}} ]\right) \varphi ( \tau, \mathfrak{z} ), \quad x,y\in L,
\end{align*}
and if its Fourier expansion takes the form 
\begin{equation*}
\varphi ( \tau, \mathfrak{z} )= \sum_{ n=0}^{\infty}\sum_{ \ell \in L^*}f(n,\ell)e^{2\pi i (n\tau + \latt{\ell,\mathfrak{z}})},
\end{equation*}
and if it is invariant with respect to the action of $G$ on the lattice variable
$$
\varphi(\tau,\sigma (\mathfrak{z}))=\varphi(\tau, \mathfrak{z}), \quad \sigma\in G,
$$
then $\varphi$ is called a $G$-invariant weak Jacobi form of weight $k$ and index $t$ associated to $L$. If $f(n,\ell) = 0$ whenever $2nt - \latt{\ell,\ell} <0$, then $\varphi$ is called a $G$-invariant holomorphic Jacobi form.  
We denote by $J^{\w ,G}_{k,L,t}$ and  $ J^{G}_{k,L,t} $ the vector spaces of $G$-invariant weak and holomorphic  Jacobi forms of weight $k$ and index $t$, respectively.
\end{definition}

We remark that the  Jacobi form of index $0$ is independent of the lattice variable $\mathfrak{z}$ and its definition reduces to that of a classical modular form on $\SL_2(\ZZ)$. Thus $J_{k,L,0}^{\w,G}=M_k(\SL_2(\ZZ))$.

All $G$-invariant weak Jacobi forms associated to $L$ form a bigraded algebra over $\CC$
$$
J_{*,L,*}^{\w,G}:=\bigoplus_{k\in \ZZ, t\in \ZZ_{\geq 0}} J_{k,L,t}^{\w,G}.
$$
We will investigate the algebraic structure of this type of  algebra.

We first define the Jacobian of Jacobi forms which is an analogue of the Jacobian of Siegel modular forms defined in \cite{AI05}. This operator plays a vital role in this paper.

\begin{proposition}\label{prop:Jacobian}
Let $l$ be the rank of $L$. For $1\leq j \leq l+1$, let $\phi_j$ be a $G$-invariant weak Jacobi form of weight $k_j$ and index $m_j$ associated to $L$. We fix a coordinate $\mathfrak{z}=(z_1,...,z_l)$ of the space $L\otimes \CC$.  We define the Jacobian of the $l+1$ Jacobi forms as follows
$$
J:=J(\phi_1,...,\phi_{l+1})=\left\lvert \begin{array}{cccc}
m_1\phi_1 & m_2\phi_2 & \cdots & m_{l+1}\phi_{l+1} \\ 
\frac{\partial \phi_1}{\partial z_1} & \frac{\partial \phi_2}{\partial z_1} & \cdots & \frac{\partial \phi_{l+1}}{\partial z_1} \\ 
\vdots & \vdots & \ddots & \vdots \\ 
\frac{\partial \phi_1}{\partial z_l} & \frac{\partial \phi_2}{\partial z_l} & \cdots & \frac{\partial \phi_{l+1}}{\partial z_l}
\end{array}   \right\rvert.
$$
\begin{enumerate}
\item The function $J$ is a weak Jacobi form of weight $l+\sum_{j=1}^{l+1}k_j$ and index $\sum_{j=1}^{l+1} m_j$ associated to $L$. Moreover, it is invariant under $G$ up to the determinant character $\det$.
\item The function $J$ is not identically zero if and only if the $l+1$ Jacobi forms $\phi_j$ are algebraically independent over $M_*(\SL_2(\ZZ))$.
\item Let $r$ be a primitive vector of $L$. If the reflection 
$$
\sigma_r: \quad L\otimes\CC \to L\otimes\CC, \quad v\mapsto v- \frac{2\latt{r,v}}{\latt{r,r}}r
$$ 
belongs to $G$, then $J$ vanishes on the set 
$$
D_{r}(\tau):=\{ (\tau,\mathfrak{z}) \in \HH \times (L\otimes\CC) : \latt{r^\vee, \mathfrak{z}}\in \ZZ\tau \oplus \ZZ\},
$$
where $r^\vee=2r/(r,r)$ is the coroot of $r$. 
\end{enumerate}
\end{proposition}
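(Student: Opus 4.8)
The plan is to verify the three claims in order, treating (1) as the computational heart and (3) as the step requiring the most care. For part (1), I would start from the transformation laws in Definition \ref{def:JFs}. Differentiating the elliptic transformation law $\varphi(\tau,\mathfrak{z}+x\tau+y)=\exp(-t\pi i[\latt{x,x}\tau+2\latt{x,\mathfrak{z}}])\varphi(\tau,\mathfrak{z})$ with respect to each coordinate $z_i$ shows that $\partial\phi_j/\partial z_i$ picks up the same exponential automorphy factor plus an extra term proportional to $m_j x_i$ times $\phi_j$; this extra term is a linear combination of the entries in the top row of the determinant, so it is killed by row operations. Similarly, applying $\partial/\partial z_i$ to the modular transformation law under $\begin{pmatrix}a&b\\c&d\end{pmatrix}$ produces a factor $(c\tau+d)^{k_j+1}$ together with a term $\propto c z_i \phi_j$ again proportional to the top row. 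Expanding the determinant $J$ along its columns and using multilinearity, the spurious terms cancel by the alternating property, leaving $J$ with the modular weight $l+\sum k_j$ (one extra unit per differentiated row, i.e. $l$ in total) and index $\sum m_j$. The Fourier expansion has non-negative powers of $q$ because each $\partial\phi_j/\partial z_i$ does, so $J$ is weak. The $\det$-character under $G$ follows because permuting the coordinates $z_1,\dots,z_l$ by $\sigma\in G\subset\Orth(L)$ permutes the last $l$ rows of the Jacobian matrix accordingly (more precisely, the gradient transforms by the matrix of $\sigma^{-1}$, whose determinant is $\det(\sigma)=\pm1$), while the top row is $G$-invariant; hence $J\circ\sigma=\det(\sigma)\,J$.

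For part (2), I would argue both directions. If the $\phi_j$ satisfy a nontrivial polynomial relation $P(\phi_1,\dots,\phi_{l+1})=0$ over $M_*(\SL_2(\ZZ))$, choosing $P$ of minimal degree and applying each derivation $\partial/\partial z_i$ and the index-grading derivation (the operator sending $\phi_j\mapsto m_j\phi_j$, which corresponds to $\partial/\partial\tau$ up to modular forms, or can be taken formally) gives $l+1$ linear relations among the partial derivatives $\partial P/\partial\phi_j$ evaluated at the $\phi$'s; since $\partial P/\partial\phi_j$ cannot all vanish by minimality and the fact that $M_*(\SL_2(\ZZ))$-algebraic independence of $\tau$-like coordinates can be invoked, the coefficient matrix — which is exactly the Jacobian matrix defining $J$ — is singular, so $J\equiv0$. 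Conversely, if $J\equiv 0$, the $l+1$ functions $\phi_j$, viewed as functions of the $l+1$ variables $(\tau,z_1,\dots,z_l)$, have a vanishing Jacobian determinant with respect to these variables (after replacing the top row $m_j\phi_j$ by $\partial\phi_j/\partial\tau$, which differs by the heat-operator correction proportional to lower rows and hence does not change the determinant), so by the standard criterion they are functionally, hence algebraically, dependent over $\CC$; a descent argument upgrades this to dependence over $M_*(\SL_2(\ZZ))$. I would cite the analogous statement in \cite{AI05} for the bookkeeping here.

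For part (3), the key point is that the reflection $\sigma_r\in G$ forces $J$ to be anti-invariant under $\sigma_r$ by part (1), since $\det(\sigma_r)=-1$; therefore $J$ must vanish on the fixed-point locus of $\sigma_r$ acting on $\HH\times(L\otimes\CC)$. I would compute this fixed locus explicitly: $\sigma_r$ fixes $(\tau,\mathfrak{z})$ as a point of the Jacobi variety exactly when $\sigma_r(\mathfrak{z})\equiv\mathfrak{z}\pmod{L\tau\oplus L}$, i.e. when $\frac{2\latt{r,\mathfrak{z}}}{\latt{r,r}}r=\latt{r^\vee,\mathfrak{z}}\,r\in L\tau\oplus L$; since $r$ is primitive this is equivalent to $\latt{r^\vee,\mathfrak{z}}\in\ZZ\tau\oplus\ZZ$, which is precisely the defining condition of $D_r(\tau)$. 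The main obstacle I anticipate is being careful about the distinction between the genuine fixed-point set of $\sigma_r$ on $L\otimes\CC$ (the hyperplane $\latt{r,\mathfrak{z}}=0$) and the larger set $D_r(\tau)$ on which $J$ vanishes: the vanishing on all of $D_r(\tau)$ comes not from naive fixed points but from combining $\sigma_r$-anti-invariance with the elliptic transformation law, which identifies $\mathfrak{z}$ with $\mathfrak{z}+x\tau+y$ up to an automorphy factor — so that $\sigma_r$ together with the translation lattice generates an affine reflection group whose mirror arrangement is exactly $\bigcup D_r(\tau)$. Concretely, if $\latt{r^\vee,\mathfrak{z}_0}=a\tau+b$ with $a,b\in\ZZ$, then translating by $-\,\tfrac{a}{\,2\,}$... — rather, choosing $x,y\in L$ with $\latt{r^\vee,x\tau+y}=a\tau+b$ (possible since $r^\vee\in L^*$ pairs integrally and, $r$ being primitive, surjectively with $L$) moves $\mathfrak{z}_0$ into the genuine mirror $\latt{r,\mathfrak{z}}=0$ up to lattice translation, and the transformation laws then show $J(\tau,\mathfrak{z}_0)$ differs from $\pm J$ at a genuine fixed point by a nonvanishing factor, whence $J(\tau,\mathfrak{z}_0)=0$. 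I would write this out as the final step.
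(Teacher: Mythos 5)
Your part (1) is correct and takes a more computational route than the paper: you differentiate the two transformation laws directly and absorb the spurious terms into the top row by row operations (which is exactly why the top row is $m_j\phi_j$), whereas the paper passes to the index-zero meromorphic quotients $\varphi_j=\phi_j^{m_1}/\phi_1^{m_j}$ and writes $J$ as an explicit multiple of their ordinary Jacobian $\Psi$ in $\mathfrak{z}$; both work, but the paper's factorization is then reused in part (2). In the forward implication of (2) you should say explicitly that $P$ may be assumed index-homogeneous (each monomial has the same total index $c$), so that the Euler identity $\sum_j m_j\phi_j\,\partial P/\partial\phi_j=cP=0$ supplies the first equation of the linear system; your parenthetical about $\partial/\partial\tau$ is a distraction — this row has nothing to do with $\tau$-differentiation.

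Two steps would fail as written. First, in the converse of (2) you claim the top row $m_j\phi_j$ may be replaced by $\partial\phi_j/\partial\tau$ up to a "heat-operator correction proportional to lower rows". No such pointwise identity exists: the index $m_j$ enters through the elliptic transformation law, not as an eigenvalue of $\partial_\tau$, and the heat operator involves second derivatives in $\mathfrak{z}$, so it does not give a row operation. The paper avoids this entirely by showing that algebraic independence of the $\phi_j$ forces the $l$ index-zero quotients $\varphi_j$ to be local parameters on the fibre $L\otimes\CC/(L\tau\oplus L)$, whence $\Psi\not\equiv 0$ and so $J\not\equiv 0$. Second, in (3) your concluding step translates a point $\mathfrak{z}_0$ with $\latt{r^\vee,\mathfrak{z}_0}=a\tau+b$ into the genuine mirror $\latt{r,\mathfrak{z}}=0$ by some $x\tau+y$ with $x,y\in L$; this needs $\latt{r^\vee,\cdot}\colon L\to\ZZ$ to take the values $a$ and $b$, which fails in general. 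For $L=\ZZ r$ with $\latt{r,r}=2$ the image is $2\ZZ$, so three of the four $2$-torsion components of $D_r(\tau)$ are not lattice translates of the mirror — and these are precisely the components where the claim has content. The correct mechanism (which you do describe qualitatively) keeps $\mathfrak{z}_0=u\tau+v$ fixed and uses $\sigma_r(\mathfrak{z}_0)=\mathfrak{z}_0-ar\tau-br$ with $ar,br\in L$: anti-invariance plus the elliptic law give $-J(\tau,\mathfrak{z}_0)=\varepsilon\,J(\tau,\mathfrak{z}_0)$ with $\varepsilon=\exp(-\pi i m(\latt{x,x}\tau+2\latt{x,\mathfrak{z}_0}))$ for $x=-ar$, and one must compute $\latt{x,x}\tau+2\latt{x,\mathfrak{z}_0}=-ab\latt{r,r}\in 2\ZZ$ (here evenness of $L$ is used) to see $\varepsilon=1$ rather than $\varepsilon=-1$, in which case the equation would be vacuous. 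Your sketch omits this verification, and it cannot be bypassed by reducing to genuine fixed points of the linear reflection.
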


\begin{proof}
\begin{enumerate}
\item[(1)] The proof is similar to that of \cite[Proposition 2.1]{AI05}. For $2\leq j \leq l+1$, the function $\varphi_j=\phi_j^{m_1}/\phi_1^{m_j}$ is a meromorphic Jacobi form of weight $k_jm_1-k_1m_j$ and index $0$ invariant under $G$ associated to $L$ (i.e. a meromorphic function on $\HH \times (L \otimes \CC)$ satisfying the transformation laws in Definition \ref{def:JFs}). We define the usual Jacobian of the $l$ forms $\varphi_j$ with respect to $\mathfrak{z}$:
$$
\Psi (\tau,\mathfrak{z}) = \frac{\partial (\varphi_2,...,\varphi
_{l+1}) }{\partial (z_1,...,z_l)}.
$$
We find that $\Psi$ is a meromorphic Jacobi form of weight $l+\sum_{j=2}^{l+1}(k_jm_1-k_1m_j)$ and index $0$ associated to $L$ by verifying the two transformation laws in Definition \ref{def:JFs}. It is easy to check the following:
$$
\frac{\partial}{\partial z_i}\left( \frac{\phi_j^{m_1}}{\phi_1^{m_j}} \right)= \frac{m_1\phi_j^{m_1-1}}{\phi_1^{m_j}} \left[ \frac{\partial \phi_j}{\partial z_i} - \frac{m_j\phi_j}{m_1\phi_1}\times \frac{\partial \phi_1}{\partial z_i}
\right], \quad 1\leq i \leq l.
$$
This implies that
$$
J(\phi_1,...,\phi_{l+1})=\frac{\phi_1^{1+\sum_{j=2}^{l+1}m_j}}{m_1^{l-1}(\phi_2\cdots \phi_{l+1})^{m_1-1}} \Psi.
$$
It follows that $J$ is a weak Jacobi form of weight $l+\sum_{j=1}^{l+1}k_j$ and index $\sum_{j=1}^{l+1}m_j$ associated to $L$. It is obvious that $\Psi(\tau,\sigma(\mathfrak{z}))=\det(\sigma)\Psi(\tau,\mathfrak{z})$ for any $\sigma\in G$. Therefore $J$ is invariant under $G$ up to the determinant character.
\item[(2)] $(\Leftarrow)$ If the $l+1$ Jacobi forms $\phi_j$ are algebraically independent over $M_*(\SL_2(\ZZ))$, then the $l$ functions $\varphi_j$ are local parameters of the $l$-dimensional variety $L\otimes\CC/ (L\cdot \tau \oplus L)$. Thus the usual Jacobian $\Psi$ does not vanish identically, which yields that $J$ is not identically zero. 
\smallskip

\noindent
$(\Rightarrow )$ Suppose that $J\neq 0$. If the $l+1$ modular forms $\phi_j$ are algebraically dependent over $M_*(\SL_2(\ZZ))$, then there exists a non-zero polynomial $P$ over $M_*(\SL_2(\ZZ))$ in $l+1$ variables such that $P(\phi_1,...,\phi_{l+1})=0$. We write 
$$
P(X_1,...,X_{l+1})=\sum_{(i_1,...,i_{l+1})\in \ZZ_{\geq 0}^{l+1}} c(i_1,...,i_{l+1}) X_1^{i_1}\cdots X_{l+1}^{i_{l+1}}.
$$
We can assume that $\sum_{j=1}^{l+1} m_j i_j$ is a fixed constant $c$ for any $(i_1,...,i_{l+1})\in \ZZ_{\geq 0}^{l+1}$ due to the second transformation law in Definition \ref{def:JFs}. We note that $c$ is the index of the Jacobi form $P(\phi_1,...,\phi_{l+1})$.  By taking the differentials of $P(\phi_1,...,\phi_{l+1})$ with respect to $z_1$,...,$z_n$ respectively, we obtain the following system of linear equations 
$$
\mathbf{J} \left(\frac{\partial P}{\partial \phi_1}, \frac{\partial P}{\partial \phi_2},...,\frac{\partial P}{\partial \phi_{l+1}}  \right)^t = \left( cP, \frac{\partial P}{\partial z_1},...,\frac{\partial P}{\partial z_l} \right)^t=0,
$$
where $\mathbf{J}$ is the Jacobian matrix in the definition of $J$. This leads to a contradiction because we can assume that $P$ has the minimal degree. Therefore these $\phi_j$ are algebraically independent over $M_*(\SL_2(\ZZ))$ if $J\neq 0$.
\item[(3)] Let $\sigma_r\in G$ for a primitive $r\in L$. Then we have 
\begin{equation}\label{eq:1}
J(\tau,\sigma_r(\mathfrak{z}))=-J(\tau,\mathfrak{z}).
\end{equation}
The second transformation law in Definition \ref{def:JFs} gives
\begin{equation}\label{eq:2}
J(\tau,\mathfrak{z}+x\tau+y)=\exp(-\pi i m (\latt{x,x}\tau+2\latt{x,\mathfrak{z}}))J(\tau,\mathfrak{z}), \quad x,y\in L,
\end{equation}
where $m$ is the index of $J$ as a Jacobi form. Let $a$ and $b$ be two integers. For any vector $\mathfrak{z}\in L\otimes\CC$ satisfying $\latt{r^\vee,\mathfrak{z}}=a\tau+b$, we can write $\mathfrak{z}=u\tau+v$ with $u,v\in L\otimes\RR$. Then we have that $\latt{r^\vee,u}=a$ and $\latt{r^\vee,v}=b$. 
It is easy to check that $\sigma_r(u\tau+v)=u\tau+v-ar\tau-br$. By taking $\mathfrak{z}=u\tau+v$, $x=-ar$ and $y=-br$ in \eqref{eq:1} and \eqref{eq:2}, we get 
$$
J(\tau,u\tau+v)=-J(\tau,u\tau+v),
$$
which implies that $J$ vanishes when $\mathfrak{z}= u\tau+v$. This finishes the proof of (3).
\end{enumerate}
\end{proof}

In \cite[Theorem 5.1]{Wan20}, we prove a sufficient condition for the graded algebra of orthogonal modular forms to be free. We here prove a similar criterion for Jacobi forms, which provides a simple method to determine the structure of a bigraded algebra of Jacobi forms. 
\begin{proposition}\label{prop:criterion}
Let $L$ be an even positive definite lattice of rank $l$ and $G$ be a subgroup of $\Orth(L)$.  Let $\phi_j$ be a $G$-invariant weak Jacobi form of weight $k_j$ and index $m_j$ associated to $L$ for $1\leq j\leq l+1$. Assume that the Jacobian $J(\phi_1,...,\phi_{l+1})$ is not identically zero and we denote its weight and index by $k_0$ and $m_0$ respectively. We further assume that there is a weak Jacobi form $\hat{J}$ of weight $\hat{k}_0$ and index $m_0$ associated to $L$ which vanishes precisely with multiplicity one on $D_r(\tau)$ for all primitive $r\in L$ satisfying $\sigma_r\in G$, and it has a non-zero Fourier coefficient of type $f(0,\ell)$.
\begin{enumerate}
\item If $k_0=\hat{k}_0$, then the bigraded algebra $J_{*,L,*}^{\w, G}$ is freely generated by the $l+1$ forms $\phi_j$ over $M_*(\SL_2(\ZZ))$.
\item If $k_0>\hat{k}_0$, then for any $\varphi_m\in J_{k,L,m}^{\w,G}$ there exists a unique polynomial $P$ over $M_*(\SL_2(\ZZ))$ in $l+1$ variables such that
$$
g^{m-M+1} \varphi_m = P(\phi_1,...,\phi_{l+1}),
$$
where 
$$
g=J(\phi_1,...,\phi_j) / \hat{J} \in M_{k_0-\hat{k}_0}(\SL_2(\ZZ)),
$$
and $M$ is the minimal index of $G$-invariant weak Jacobi forms not contained in  the space $M_*(\SL_2(\ZZ))[\phi_j,1\leq j\leq l+1]$. We further define meromorphic Jacobi forms $\hat{\phi}_j=\phi_j / g^{m_j}$ for $1\leq j\leq l+1$. Then we have
$$
J_{*,L,*}^{\w,G}\subsetneq M_*(\SL_2(\ZZ))[\hat{\phi}_j, 1\leq j\leq l+1].
$$
\end{enumerate}
\end{proposition}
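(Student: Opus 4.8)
The plan is to run the standard "Jacobian trick" argument: given any $\varphi \in J_{k,L,t}^{\w,G}$, replace one of the generators $\phi_j$ in the Jacobian by $\varphi$ and compare divisors. I would set up, for each $1 \le i \le l+1$,
$$
J_i := J(\phi_1,\dots,\phi_{i-1},\varphi,\phi_{i+1},\dots,\phi_{l+1}),
$$
a $G$-invariant-up-to-$\det$ weak Jacobi form of index $m_0 - m_i + t$, vanishing on $D_r(\tau)$ for every primitive $r$ with $\sigma_r \in G$ by part (3) of Proposition \ref{prop:Jacobian}. The first key step is a divisor comparison: since $\hat J$ vanishes on each $D_r(\tau)$ with multiplicity exactly one and each $J_i$ is $\det$-anti-invariant (hence divisible by the product of "reflection divisors"), the quotient $J_i/\hat J$ is a \emph{holomorphic} weak Jacobi form of index $m_0 - m_i + t - m_0 = t - m_i$, now genuinely $G$-invariant. (Here one uses that $\hat J$ itself is forced by its divisor and $\det$-anti-invariance properties to be anti-invariant, so $J_i/\hat J$ is invariant; if $t < m_i$ this quotient must be $0$.) Multiplying by $g = J/\hat J \in M_{k_0 - \hat k_0}(\SL_2(\ZZ))$ and expanding the determinant $J_i$ along its first column gives Cramer-type relations expressing $g\,m_i\varphi$ — and more generally $g$ times the appropriate minors — in terms of the $\phi_j$, their $z$-derivatives, and the $J_i/\hat J$.

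The second step is an induction on the index $t$. For $t < M$ the form $\varphi$ lies in $M_*(\SL_2(\ZZ))[\phi_j]$ by definition of $M$, so the claimed identity $g^{\,t-M+1}\varphi = P(\phi_1,\dots,\phi_{l+1})$ holds with a negative or zero exponent interpreted via the embedding $M_*(\SL_2(\ZZ))[\phi_j] \subset M_*(\SL_2(\ZZ))[g^{-1}][\phi_j]$; for $t = M$ it is the base case. For $t > M$: from the Cramer relations, $g\varphi$ is an $M_*(\SL_2(\ZZ))$-linear combination of products $\phi_{j_1}\cdots$ times forms of strictly smaller index (the $J_i/\hat J$ have index $t - m_i < t$ since $m_i \ge 1$), to which the inductive hypothesis applies; collecting powers of $g$ yields $g^{\,t-M+1}\varphi \in M_*(\SL_2(\ZZ))[\phi_j]$. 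Setting $\hat\phi_j = \phi_j/g^{m_j}$ then gives $\varphi = P(\phi_1,\dots,\phi_{l+1})/g^{\,t-M+1} \in M_*(\SL_2(\ZZ))[\hat\phi_j]$, since every monomial $\phi_1^{i_1}\cdots\phi_{l+1}^{i_{l+1}}$ appearing in $P$ has $\sum m_j i_j = t$, which matches the denominator $g^{\,\sum m_j i_j / \min} $ appropriately — more precisely one checks the $g$-exponents balance so that the result is a polynomial in the $\hat\phi_j$ with $M_*(\SL_2(\ZZ))$-coefficients. The strict inclusion $\subsetneq$ follows because $k_0 > \hat k_0$ forces $\varphi_M \notin M_*(\SL_2(\ZZ))[\phi_j]$ for some $M$, i.e. a genuine denominator $g$ is needed, so $1/g \in M_*(\SL_2(\ZZ))[\hat\phi_j] \setminus J_{*,L,*}^{\w,G}$ after noting $\hat\phi_j^{m_k}/\hat\phi_k^{m_j}$-type combinations recover negative powers of $g$; alternatively $J/\hat J = g$ is a nonconstant modular form lying in the polynomial ring generated by the $\hat\phi_j$ but its reciprocal cannot be a weak Jacobi form.

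Uniqueness of $P$ is exactly part (2) of Proposition \ref{prop:Jacobian}: two polynomials giving the same value would produce, after subtraction, an algebraic relation among $\phi_1,\dots,\phi_{l+1}$ over $M_*(\SL_2(\ZZ))$, contradicting $J \not\equiv 0$; the weight-index bigrading pins down which monomials can occur, as already exploited above.

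I expect the main obstacle to be the \textbf{divisor bookkeeping} in the first step: one must argue that the $\det$-anti-invariant form $J_i$ is divisible, as a holomorphic function on $\HH \times (L\otimes\CC)$, by the same divisor $\sum_r D_r(\tau)$ (each with multiplicity one) that $\hat J$ vanishes on, so that the quotient is holomorphic — this requires knowing that $\det$-anti-invariance under the reflections $\sigma_r \in G$ forces vanishing on $D_r(\tau)$ to \emph{at least} order one, which is part (3), and that $\hat J$'s vanishing is \emph{exactly} order one, which is the hypothesis; then holomorphy of the quotient is a codimension-one / Riemann-extension argument. One also has to handle the behavior at the cusp (the Fourier expansion condition defining weak Jacobi forms), using the hypothesis that $\hat J$ has a nonzero coefficient $f(0,\ell)$, to ensure that dividing by $\hat J$ does not introduce poles at $q = 0$. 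These are the technical heart; the algebra of the induction is then routine.
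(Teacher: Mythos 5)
Your proposal follows essentially the same route as the paper: replace one generator by $\varphi$ in the Jacobian, use the divisor hypothesis on $\hat J$ (exact order-one vanishing on the $D_r(\tau)$ plus the nonzero $f(0,\ell)$ coefficient) to show each $J_i/\hat J$ is a $G$-invariant weak Jacobi form of strictly smaller index, then combine the Cramer-type identity $m\varphi\, g = \sum_i \pm\, m_i\phi_i\,(J_i/\hat J)$ with induction on the index, and get uniqueness from the algebraic independence guaranteed by $J\not\equiv 0$. The only cosmetic difference is that the paper derives the key identity by expanding a degenerate $(l+2)\times(l+2)$ determinant with a repeated first row rather than by expanding each $J_i$ along a column, but the resulting relation and the rest of the argument coincide.
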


\begin{proof}
Let $\phi_{l+2}\in J_{k_{l+2},L,m_{l+2}}^{\w,G}$.  For $1\leq t \leq l+2$ we define $J_t$ as the Jacobian of the $l+1$ Jacobi forms $\phi_j$ except $\phi_t$. It is clear that $J(\phi_1,...,\phi_{l+1})=J_{l+2}$.  By Proposition \ref{prop:Jacobian}, the quotient $g_t:=J_t/\hat{J}$ is $G$-invariant and holomorphic on $\HH\times (L\otimes\CC)$.  Since the $q^0$-term of $\hat{J}$ is not zero, the Fourier expansion of $g_t$ has no terms $c(n,\ell)$ with negative $n$. Therefore $g_t$ is a $G$-invariant weak Jacobi form associated to $L$.  It is easy to check that the following identity holds:
$$
\sum_{t=1}^{l+2} (-1)^t m_t \phi_t J_t = 0.
$$
By $J_t=\hat{J}g_t$, we have 
$$
\sum_{t=1}^{l+2} (-1)^t m_t \phi_t g_t = 0,
$$
which yields 
\begin{equation}\label{eq:main}
(-1)^{l+2}m_{l+2}\phi_{l+2} g =-\sum_{t=1}^{l+1}(-1)^t m_t \phi_t g_t
\end{equation}
where $g:=g_{l+2}$. Since $J_{l+2}$ and $\hat{J}$ has the same index, the function $g$ is a weak Jacobi form of index $0$ associated to $L$. Thus $g$ is independent of the variable $\mathfrak{z}$ and is a modular form of weight $k_0-\hat{k}_0$ on $\SL_2(\ZZ)$.
\begin{enumerate}
\item[(i)] When $k_0=\hat{k}_0$, the modular form $g$ has weight $0$ and then it is a non-zero constant. We observe that the index of non-zero $g_t$ is less than the index of $\phi_{l+2}$. By induction on index, we prove the assertion (1) with the help of \eqref{eq:main}.
\item[(ii)] When $k_0>\hat{k}_0$, we suppose that $J_{*,L,*}^{\w,G}$ is not generated by these $\phi_j$, otherwise there is nothing to prove. We assume that $\phi_{l+2}$ is a weak Jacobi form of index $M$ not contained in $M_*(\SL_2(\ZZ))[\phi_j,1\leq j\leq l+1]$.  Again, we note that the index of non-zero $g_t$ is less than the index of $\phi_{l+2}$. By \eqref{eq:main} and the minimality of $M$, we assert that $g\phi_{l+1}\in M_*(\SL_2(\ZZ))[\phi_j,1\leq j\leq l+1]$. We then complete the proof of assertion (2) by induction on index.
\end{enumerate}
\end{proof}

\section{Weyl invariant Jacobi forms}\label{sec:Wirthmuller}
In this section we recall K. Wirthm\"{u}ller's theorem. Let $R$ be an irreducible root system of rank $l$. The classification of $R$ is as follows  (see \cite{Bou60})
\begin{align*}
&A_l (l\geq 1),& &B_l (l\geq 2),& &C_l (l\geq 3),& &D_l (l\geq 4),& &E_6,& &E_7,& &E_8,& &G_2,& &F_4.&
\end{align*}
Let $W(R)$ be the Weyl group of $R$. The roots of $R$ span an integral lattice with the standard bilinear form $(\cdot,\cdot)$ on $\RR^l$. If this lattice is odd, we rescale its bilinear form by $2$ such that it becomes an even lattice. We denote this even positive definite lattice by $L_R$ and its normalized bilinear form by $\latt{\cdot, \cdot}$.  Following Definition \ref{def:JFs}, a Weyl invariant weak Jacobi form of type $R$ is just a $W(R)$-invariant weak Jacobi form associated to $L_R$.

We introduce some notations of root systems.  The dual root system of $R$ is defined as 
\begin{equation*}
R^\vee=\{ r^\vee: r\in R \},
\end{equation*}
where $r^\vee=\frac{2}{(r,r)}r$ is the coroot of $r$.
The weight lattice of $R$ is defined as
$$
\Lambda(R)=\{ x\in R\otimes\QQ: (x,r^\vee)\in \ZZ \}.
$$
Let $\widetilde{\alpha}$ denote the highest root of $R^\vee$.  The following significant theorem was proved by  K. Wirthm\"{u}ller  in 1992.

\begin{theorem}[Theorem 3.6 in \cite{Wir92}]\label{th:Wir}
If $R$ is an irreducible root system of rank $l$ and not of type $E_8$, then $J^{\w ,W(R)}_{*,L_R,*}$ over $M_*(\SL_2(\ZZ))$ is freely generated by $l+1$ $W(R)$-invariant weak Jacobi forms of weight $-k_j$ and index $m_j$
\begin{equation*}
\phi_{-k_j,R,m_j}(\tau,\mathfrak{z}), \quad 1\leq j\leq l+1.
\end{equation*}
Apart from $(k_1, m_1)=(0, 1)$, the indices $m_j$ are the coefficients of $\widetilde{\alpha}^\vee$  written as a linear combination of the simple roots of $R$. The integers $k_j$ are the degrees of the generators of the ring of $W(R)$-invariant polynomials, namely the exponents of the Weyl group $W(R)$ increased by $1$.
\end{theorem}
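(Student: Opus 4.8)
The plan is to apply Proposition \ref{prop:criterion}(1) with $L=L_R$ and $G=W(R)$. The strategy therefore splits into three tasks: (a) exhibit $l+1$ algebraically independent $W(R)$-invariant weak Jacobi forms $\phi_{-k_j,R,m_j}$ with the asserted weights and indices, so that the Jacobian $J:=J(\phi_{-k_1,R,m_1},\dots,\phi_{-k_{l+1},R,m_{l+1}})$ is not identically zero; (b) construct a $W(R)$-anti-invariant weak Jacobi form $\Phi_R$, built as a theta block (a product of Jacobi theta functions $\vartheta(\tau,(r,\mathfrak z))$ over the positive roots of $R^\vee$, divided by a suitable power of $\eta(\tau)$), that vanishes with multiplicity exactly one along each divisor $D_r(\tau)$ for $\sigma_r\in W(R)$ and has a nonzero Fourier coefficient of type $f(0,\ell)$; and (c) check that the weight $k_0$ and index $m_0$ of $J$ agree with the weight $\hat k_0$ and index $m_0$ of $\Phi_R$. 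Once (a)--(c) are in place, Proposition \ref{prop:criterion}(1) immediately gives that $J^{\w,W(R)}_{*,L_R,*}$ is freely generated over $M_*(\SL_2(\ZZ))$ by the $l+1$ forms $\phi_{-k_j,R,m_j}$.

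For task (a) I would not reprove the existence of the generators from scratch but cite the explicit constructions already available in the literature (\cite{Ber00a,Ber00b,Sat98,Sak17,Sak19,AG19,Adl20} case by case, together with the trivial generator $\phi_{0,R,1}=\phi_{0,A_1,1}$ pulled back from $A_1$); what matters here is only that they exist, are $W(R)$-invariant, and are algebraically independent over $M_*(\SL_2(\ZZ))$, the latter being exactly the content of Proposition \ref{prop:Jacobian}(2) once $J\not\equiv 0$. In fact $J\not\equiv 0$ will follow a posteriori once (b) and (c) are verified, since the theta block $\Phi_R$ is a nonzero weak Jacobi form of the same index as $J$ and divisible in the ring of Jacobi forms by $J$ (by Proposition \ref{prop:Jacobian}(3), $J$ vanishes along every $D_r(\tau)$), so if $J$ and $\Phi_R$ have the same weight then $J$ is a nonzero constant multiple of $\Phi_R$. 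Thus the weight/index bookkeeping in (c) is the linchpin.

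For task (b): the theta block candidate is
$$
\Phi_R(\tau,\mathfrak z)=\eta(\tau)^{N}\prod_{r>0,\ r\in R^\vee}\frac{\vartheta(\tau,(r,\mathfrak z))}{\eta(\tau)},
$$
where the product runs over positive coroots and $N$ is chosen so that $\Phi_R$ is holomorphic (a genuine weak Jacobi form) rather than meromorphic; the weight is then $\tfrac12(N+\#\{r>0\})$ — for a root system $\#\{r>0\}=\tfrac12(|R|)= \tfrac12(\dim\mathfrak g - l)$ — and the index is $\tfrac12\sum_{r>0}(r,r)^\vee$-type quantity, which one computes to be $h^\vee$ or a small variant thereof via the strange formula / the identity $\sum_{r>0}(r,\alpha)=h^\vee$ relations. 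The vanishing properties are standard: $\vartheta(\tau,w)$ has simple zeros exactly at $w\in\ZZ\tau+\ZZ$, so $\Phi_R$ vanishes with multiplicity one along $D_r(\tau)$ for each positive $r$, and the Weyl group permutes positive roots up to sign, producing the sign character $\det$ on $\Phi_R$; the nonvanishing of a Fourier coefficient $f(0,\ell)$ follows from inspecting the leading theta-block term. The main obstacle I anticipate is precisely this computation: verifying, uniformly across all nine families, that the weight and index of the theta block $\Phi_R$ coincide with $k_0=l+\sum_{j}(-k_j)$ and $m_0=\sum_j m_j$ — equivalently that $\sum_{j=1}^{l+1}k_j - l = \#\{\text{positive roots}\}$ (a restatement of the classical fact that the sum of the exponents of $W(R)$ equals the number of positive roots) and that $\sum_{j=1}^{l+1}m_j$ equals the index of $\Phi_R$ (which encodes the coefficients of the highest coroot $\widetilde\alpha^\vee$ in the simple-root basis, plus the $+1$ from the trivial generator). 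These are finite checks, case by case, using the tables of exponents and marks in \cite{Bou60}, and the exclusion of $E_8$ enters exactly because there the required equality of weights fails, forcing instead the weaker conclusion of Proposition \ref{prop:criterion}(2), treated separately in \S\ref{sec:E8}.
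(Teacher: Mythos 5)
Your overall strategy is the paper's: build the theta block $\Phi_R=\prod_{r>0}\vartheta(\tau,(r,\mathfrak z))/\eta^3(\tau)$ over the positive roots of $R^\vee$, check it vanishes to order one exactly on the $D_r(\tau)$ and has a nonzero $f(0,\ell)$, match its weight and index against those of the Jacobian of the $l+1$ candidate generators via the classical identities (sum of exponents $=$ number of positive roots; indices $=$ marks of $\widetilde\alpha^\vee$ plus $1$), and invoke Proposition \ref{prop:criterion}(1). That part is sound and is exactly how the paper proceeds.

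There is, however, one genuine gap: your claim that $J\not\equiv 0$ ``will follow a posteriori'' from (b) and (c) is circular. Proposition \ref{prop:Jacobian}(3) and the divisor properties of $\Phi_R$ give only that $J/\Phi_R$ is a holomorphic Jacobi form of weight and index $0$, hence a constant --- but possibly the constant $0$. Nothing in the weight/index bookkeeping rules out $J\equiv 0$; that happens precisely when the chosen forms are algebraically dependent over $M_*(\SL_2(\ZZ))$ (Proposition \ref{prop:Jacobian}(2)), and this must be verified for the specific constructed generators as an independent input to Proposition \ref{prop:criterion}. The paper treats this as the substantive remaining step: for most root systems one can compute enough of the Fourier expansion of $J$ directly (e.g.\ for $A_1$ one checks $J$ is a nonzero multiple of $\vartheta(\tau,2z)/\eta^3$), but for $E_6$ and $E_7$, where the Jacobian is computationally inaccessible, the paper proves algebraic independence of Sakai's generators by a separate argument: the $q^0$-terms are polynomials in the $l$ Weyl orbits of fundamental weights, one forms combinations isolating each orbit, and the algebraic independence of those orbits over $\CC$ forces the independence of the Jacobi forms over $M_*(\SL_2(\ZZ))$. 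Your proof needs some such argument; without it the application of Proposition \ref{prop:criterion} has an unverified hypothesis. (A minor additional slip: with your normalization $\eta^{N}\prod(\vartheta/\eta)$ each factor $\vartheta/\eta$ has weight $0$, so the stated weight formula $\tfrac12(N+\#\{r>0\})$ is off; the paper's normalization $\prod\vartheta/\eta^{3}$ gives weight $-|R|/2$ directly.)
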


\begin{table}[ht]
\caption{Weights and indices of generators of Weyl invariant weak Jacobi forms ($B_l: l\geq 2$, $C_l: l\geq 3$, $D_l: l\geq 4$)}
\renewcommand\arraystretch{1.2}
\noindent\[
\begin{array}{c|c|c|c}
R & L_R & W(R) & (k_j,m_j) \\ 
\hline 
A_l & A_l & W(A_l) & (0,1), (s,1) : 2\leq s\leq l+1\\ 
\hline 
B_l & lA_1 & \Orth(lA_1) & (2s,1) : 0\leq s \leq l  \\ 
\hline 
C_l & D_l & W(C_l) & (0,1), (2,1), (4,1), (2s,2) : 3\leq s \leq l  \\ 
\hline 
D_l & D_l & W(D_l) &  (0,1), (2,1), (4,1), (n,1), (2s,2) : 3\leq s \leq l-1 \\ 
\hline
E_6 & E_6 & W(E_6) & (0,1), (2,1), (5,1), (6,2), (8,2), (9,2), (12,
3)  \\ 
\hline
E_7 & E_7 & W(E_7) & (0,1), (2,1), (6,2), (8,2), (10,2), (12,
3), (14,3), (18,4)  \\ 
\hline
G_2 & A_2 & \Orth(A_2) & (0,1), (2,1), (6,2)  \\ 
\hline
F_4 & D_4 & \Orth(D_4) &  (0,1), (2,1), (6,2), (8,2), (12,3) 
\end{array} 
\]
\end{table}

\section{The proof of Wirthm\"{u}ller's theorem}\label{sec:proof}
In this section we present a simple proof of K. Wirthm\"{u}ller's theorem using Proposition \ref{prop:criterion} and some known constructions of generators.

Let $R$ be an irreducible root system. We first construct the particular Jacobi form $\hat{J}$ appearing in Proposition \ref{prop:criterion} for $R$. Recall that the Jacobi theta function
$$
\vartheta(\tau,z)=q^{\frac{1}{8}}(\zeta^{\frac{1}{2}}-\zeta^{-\frac{1}{2}})\prod_{n=1}^\infty (1-q^n\zeta)(1-q^n\zeta^{-1})(1-q^n), \quad q=e^{2\pi i\tau}, \zeta=e^{2\pi i z}
$$
is a holomorphic Jacobi form of weight $1/2$ and index $1/2$ for $A_1$ with a multiplier system of order 8 (see e.g. \cite[\S 1.5]{Gri18}). This function vanishes exactly on $\{ (\tau,z)\in \HH\times\CC: z\in \ZZ\tau \oplus \ZZ \}$ with multiplicity one.
The Dedekind $\eta$-function is defined by
$$
\eta(\tau)=q^{\frac{1}{24}}\prod_{n=1}^\infty (1-q^n).
$$
In terms of the two types of functions,  we define a theta function related to $R$ as follows
$$
\Phi_R(\tau,\mathfrak{z})=\prod_{\substack{r\in R^\vee \\ r>0}} \frac{\vartheta(\tau,(r,\mathfrak{z}))}{\eta^3(\tau)}, \quad \mathfrak{z}\in R\otimes \CC,
$$
where the product takes over all positive roots of $R^\vee$.
The Coxeter number $h^\vee$ of the dual root system $R^\vee$ is defined by the equality
$$
h^\vee(\mathfrak{z},\mathfrak{z})=\sum_{\substack{r\in R^\vee \\ r>0}} (r,\mathfrak{z})^2.
$$
By the above identity, the lattice $\Lambda(R)(h^\vee)$, which is obtained by rescaling the weight lattice of $R$ with $h^\vee$, is integral. The function $\Phi_R$ is a weak Jacobi form of weight $-\lvert R\rvert /2$ and index 1 for $\Lambda(R)(h^\vee)$, where $\lvert R\rvert$ is the number of roots of $R$. Let $\overline{R}$ stand for the lattice generated by roots of $R$. It is easy to check that $\overline{R}\subset \Lambda(R)$. We therefore conclude that
\begin{align*}
&\Phi_R\in J_{-\lvert R \rvert /2, L_R, h^\vee}^{\w}, \quad \text{if $\overline{R}$ is even},\\
&\Phi_R\in  J_{-\lvert R \rvert /2, L_R, h^\vee/2}^{\w}, \quad \text{if $\overline{R}$ is odd}.
\end{align*}

\begin{remark}
These functions $\Phi_R$ are the Kac-Weyl denominator functions of affine Lie algebras and also the infinite products occurring in Macdonald identities up to certain powers of Dedekind $\eta$-function. The automorphic properties of $\Phi_R$ are clear in the literature. We refer to \cite[Theorem 6.5]{Bor95} for a direct proof given by R. Borcherds. In \cite[Corollary 2.7 and formula (20)]{Gri18}, V. Gritsenko gave another proof based on the automorphic properties of $\vartheta$. We remark that V. Gritsenko, N. P. Skoruppa and D. Zagier discovered a new arithmetic proof of Macdonald identities in \cite{GSZ19}.
\end{remark}

We formulate the data of root systems in Tables \ref{table-even} and \ref{table-odd}. 
\begin{table}[h]\caption{even root systems}\label{table-even}
 \[
\renewcommand{\arraystretch}{1.2}
\begin{array}{c|c|c|c|c|c|c}
R & \overline{R} & L_R & R^\vee & \Lambda(R) & h^\vee & \frac{1}{2}\lvert R \rvert \\ \hline
A_l & A_l& A_l & A_l & A_l^* & l+1 & l(l+1)/2 \\ \hline
D_l & D_l& D_l & D_l & D_l^* & 2(l-1) & l(l-1) \\ \hline
E_6 & E_6 & E_6 & E_6 & E_6^* & 12 & 36 \\ \hline
E_7 & E_7 & E_7 & E_7 & E_7^* & 18  & 63 \\  \hline
E_8 & E_8 & E_8 & E_8 & E_8 & 30 & 120 \\ \hline
C_l & D_l& D_l & B_l & \ZZ^l & 2l-1 & l^2 \\ \hline
G_2 & A_2& A_2 & G_2(\frac{1}{3}) & A_2 & 4 & 6
\end{array}
\]
\end{table}
\begin{table}[h]\caption{odd root systems}\label{table-odd}
 \[
\renewcommand{\arraystretch}{1.2}
\begin{array}{c|c|c|c|c|c|c}
R & \overline{R} & L_R & R^\vee & \Lambda(R) & h^\vee & \frac{1}{2}\lvert R \rvert \\ \hline
B_l & \ZZ^l & lA_1 & C_l & D_l^* & 2l+2 & l^2\\ \hline
F_4 & D_4^* & D_4 & F_4(2) & D_4^* & 18 & 24
\end{array}
\]
\end{table}

We have the following claims which can be verified directly.
\begin{itemize}
\item[(a)] The function $\Phi_R(\tau, \mathfrak{z})$ vanishes precisely on $D_r(\tau)$ with multiplicity one for all roots $r$ of $R$.
\item[(b)] The Jacobi form $\Phi_R$ has a non-zero Fourier coefficient of type $f(0,\ell)$.
\item[(c)] When $R\neq E_8$, the sum of indices of all generators of $W(R)$-invariant weak Jacobi forms in Theorem \ref{th:Wir} equals the index of $\Phi_R$.
\item[(d)] When $R\neq E_8$, the sum of weights of all generators of $W(R)$-invariant weak Jacobi forms in Theorem \ref{th:Wir} equals $-\frac{1}{2}\lvert R\rvert-l$.
\end{itemize}

From the claims above, we see that for any irreducible root system $R$ not of type $E_8$ the Jacobian of generators and $\Phi_R$ have the same weight and index.  Thus, if we have some basic $W(R)$-invariant weak Jacobi forms with weights and indices as in Theorem \ref{th:Wir} and if we can show that their Jacobian does not vanish identically or equivalently that they are algebraically independent over $M_*(\SL_2(\ZZ))$, then we deduce from Proposition \ref{prop:criterion} (1) that the algebra $J_{*,L_R,*}^{\w, W(R)}$ is freely generated by these basic Jacobi forms over $M_*(\SL_2(\ZZ))$. This proves the K. Wirthm\"{u}ller theorem.

We explain the existence of these expected algebraic independent $W(R)$-invariant weak Jacobi forms. They have been constructed in the literature as mentioned in the introduction. In the simplest case of $A_1$, there are two unique (up to scale) index one weak Jacobi forms of weights $-2$ and $0$. A direct calculation shows that their Jacobian is not identically zero and thus equal to $\Phi_{A_1}=\vartheta(\tau,2z)/\eta^{3}(\tau)$ up to a constant. 

We next discuss the two most complicated cases namely $E_6$ and $E_7$. For these two root systems, it is very hard to calculate the Fourier expansion of the Jacobian. Hence it is better to prove directly that the constructed Jacobi forms are algebraically independent over $M_*(\SL_2(\ZZ))$. The generators of types $E_6$ and $E_7$ were constructed by K. Sakai in \cite{Sak19}. In the case of $E_7$,  K. Sakai first constructed eight holomorphic Jacobi forms 
$$
A_m^{E_7}\; (m=1,2,3), \quad B_m^{E_7}\; (m=2,3,4), \quad C_m^{E_7}\; (m=1,2)
$$
of weights 4, 6, 6 and index $m$ respectively. He then constructed eight weak Jacobi forms of weights and indices coinciding with Theorem \ref{th:Wir} as quotients of some polynomials over $M_*(\SL_2(\ZZ))$ in the above eight holomorphic Jacobi forms by certain powers of $\Delta=\eta^{24}$ and the Eisenstein series of weight $4$ (see \cite[pages 69--70]{Sak19}). Like the case of $E_8$ studied in \cite[Lemma 3.3]{Wan18}, for each $n$ the $q^n$-term of the Fourier expansion of a $W(E_7)$-invariant  Jacobi form can be expressed as a polynomial over $\CC$ in seven Weyl orbits of fundamental weights of $E_7$ due to the Weyl invariance. These Weyl orbits appear in $q^0$-terms of Fourier expansions of Sakai's weak Jacobi forms. We can further construct eight weak Jacobi forms as polynomials over $M_*(\SL_2(\ZZ))$ in Sakai's weak Jacobi forms whose $q^0$-terms of Fourier expansions contain only the constant one and the seven Weyl orbits respectively. From the algebraic independence of the seven Weyl orbits over $\CC$, we deduce the algebraic independence of the eight forms over $M_*(\SL_2(\ZZ))$, which forces that Sakai's weak Jacobi forms are also algebraically independent over $M_*(\SL_2(\ZZ))$. We then recover the Wirthm\"{u}ller theorem for root system $E_7$. The case of $E_6$ is similar.

\begin{remark}
One can define the bigraded algebra $J_{*,L_R,*}^{\w,W(R)}(\Gamma)$ of Weyl invariant weak Jacobi forms with respect to a congruence subgroup $\Gamma$ of $\SL_2(\ZZ)$.  In fact,  K. Wirthm\"{u}ller proved the structure result for any $\Gamma$ in \cite[Theorem 3.6]{Wir92}. More precisely, he showed that $J_{*,L_R,*}^{\w,W(R)}(\Gamma)$ is freely generated by the $r+1$ basic forms $\phi_{-k_j,R,m_j}$ of Theorem \ref{th:Wir} over the ring $M_*(\Gamma)$ of elliptic modular forms on $\Gamma$ when $R\neq E_8$. We note that these $\phi_{-k_j,R,m_j}$ are independent of $\Gamma$. Clearly, our proof holds for any $J_{*,L_R,*}^{\w,W(R)}(\Gamma)$. Thus it also gives a proof of Wirthm\"{u}ller's theorem for arbitrary $\Gamma$.
\end{remark}

\section{Weyl invariant \texorpdfstring{$E_8$}{E8} Jacobi forms}\label{sec:E8}
In this section we study Weyl invariant $E_8$ Jacobi forms. K. Sakai \cite{Sak17,Sak19} constructed nine $W(E_8)$-invariant holomorphic Jacobi forms
$$
A_m^{E_8}\; (m=1,2,3,4,5), \quad B_m^{E_8}\; (m=2,3,4,6)
$$
of weights 4, 6 and index $m$ respectively. We showed in the proof of \cite[Theorem 4.1]{Wan18} that the nine Jacobi forms are algebraically independent over $M_*(\SL_2(\ZZ))$. The sum of the indices of the nine forms equals 30 which is also the index of $\Phi_{E_8}$. Applying Proposition \ref{prop:criterion} to this case, we find $M=2$ and prove the following theorem which is stronger than \cite[Theorem 4.1]{Wan18}.
\begin{theorem}\label{th:E8}
We define a modular form
$$
g:=J(A_i^{E_8}, B_j^{E_8}, \;i=1,2,3,4,5, \;j=2,3,4,6) / \Phi_{E_8} \in M_{172}(\SL_2(\ZZ)).
$$
For any $\varphi_m\in J_{k,E_8,m}^{\w,W(E_8)}$, there exists a unique polynomial $P$ over $M_*(\SL_2(\ZZ))$ in nine variables such that
$$
g^{m-1}\varphi_m=P(A_i^{E_8}, B_j^{E_8}, \;i=1,2,3,4,5, \;j=2,3,4,6).
$$
Let us define $\hat{A}_i:=A_i^{E_8}/ g^i$ and $\hat{B}_j:=B_j^{E_8}/ g^j$. Then we have
$$
J_{*,E_8,*}^{\w,W(E_8)}\subsetneq M_*(\SL_2(\ZZ))[\hat{A}_i,\hat{B}_j, \;i=1,2,3,4,5, \;j=2,3,4,6].
$$
\end{theorem}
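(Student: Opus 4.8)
The plan is to apply Proposition \ref{prop:criterion}(2) directly with $L = E_8$, $G = W(E_8)$, and the nine Sakai forms $\phi_j \in \{A_i^{E_8}, B_j^{E_8}\}$ as the algebraically independent generators, taking $\hat{J} = \Phi_{E_8}$. First I would verify the hypotheses of that proposition in this case: by \cite[Theorem 4.1]{Wan18} the nine forms are algebraically independent over $M_*(\SL_2(\ZZ))$, so by Proposition \ref{prop:Jacobian}(2) their Jacobian $J$ is not identically zero; claims (a) and (b) from \S\ref{sec:proof} give that $\Phi_{E_8}$ vanishes precisely with multiplicity one on $D_r(\tau)$ for all roots $r$ of $E_8$ (all roots of $E_8$ generate reflections in $W(E_8)$, since $E_8 = E_8^\vee$) and has a nonzero Fourier coefficient of type $f(0,\ell)$; and the index count in claim (c)'s analogue holds here because $\sum_i i + \sum_j j = (1+2+3+4+5) + (2+3+4+6) = 15 + 15 = 30 = h^\vee$, which is the index of $\Phi_{E_8}$. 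Hence $m_0 = 30$ matches the index of $\hat{J} = \Phi_{E_8}$, as required.

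Next I would compute the weights to see that we are in case (2) rather than case (1). The weight of $\Phi_{E_8}$ is $-\tfrac12|E_8| = -120$. The weight of the Jacobian of the nine forms is $l + \sum k_j = 8 + (5\cdot 4 + 4\cdot 6) = 8 + 20 + 24 = 52$; wait — I must use the convention of the statement, where the generators have weight $-k_j$. Here the Sakai forms $A_i^{E_8}$, $B_j^{E_8}$ are holomorphic of positive weight $4$ and $6$, so in Proposition \ref{prop:Jacobian} we take $k_j = 4$ or $6$ and the Jacobian has weight $l + \sum k_j = 8 + 20 + 24 = 52$. Then $k_0 = 52 > \hat{k}_0 = -120$, so $g = J/\Phi_{E_8}$ has weight $k_0 - \hat{k}_0 = 52 - (-120) = 172$, giving $g \in M_{172}(\SL_2(\ZZ))$ as claimed. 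We are therefore in case (2), and it remains only to pin down the constant $M$.

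To identify $M = 2$ I would argue as follows. The value of $M$ is the minimal index of a $W(E_8)$-invariant weak Jacobi form not lying in $M_*(\SL_2(\ZZ))[A_i^{E_8}, B_j^{E_8}]$. Since there are no generators of index $1$ other than the trivial one (indeed the only index-one building blocks among the Sakai forms are $A_1^{E_8}$ of weight $4$, and together with $M_*(\SL_2(\ZZ))$ these span all of $J_{*,E_8,1}^{\w,W(E_8)}$ — this is the classical statement that the index-one Weyl invariant $E_8$ Jacobi forms form a free rank-one module, which follows from the theta decomposition since $E_8$ is unimodular), every weak Jacobi form of index $\le 1$ is a polynomial in the nine forms over $M_*(\SL_2(\ZZ))$. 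On the other hand at index $2$ the module of Weyl invariant forms is strictly larger than what the Sakai forms and $M_*$ produce — this is exactly the content of the non-freeness proved in \cite{Wan18}, i.e. $g$ is a genuine nonconstant modular form and some index-$2$ form requires division by $g$. Hence $M = 2$, and Proposition \ref{prop:criterion}(2) yields both the unique polynomial expression $g^{m-1}\varphi_m = P(A_i^{E_8}, B_j^{E_8})$ and the strict inclusion $J_{*,E_8,*}^{\w,W(E_8)} \subsetneq M_*(\SL_2(\ZZ))[\hat{A}_i, \hat{B}_j]$.

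The main obstacle is the sharp determination that $M = 2$ rather than $M \ge 3$: the inequality $M \le 2$ requires knowing that no new generator is forced at index $2$ only if we can exhibit at least one index-$2$ form outside $M_*[A_i^{E_8},B_j^{E_8}]$, which is precisely where one must invoke the non-freeness result from \cite{Wan18}; and the inequality $M \ge 2$ requires the (easier but not entirely formal) fact that all index-$\le 1$ Weyl invariant weak $E_8$ Jacobi forms are already polynomial in the Sakai forms, which I would justify via the dimension of $J^{\w,W(E_8)}_{*,E_8,1}$ as a free rank-one $M_*(\SL_2(\ZZ))$-module generated by $A_1^{E_8}$. Everything else — the weight and index bookkeeping, and the verification of hypotheses (a), (b) — is routine given the tables in \S\ref{sec:proof} and the cited results.
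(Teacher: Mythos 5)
Your proposal is correct and follows essentially the same route as the paper: the paper's entire proof consists of citing \cite{Wan18} for the algebraic independence of Sakai's nine forms, noting that their total index $30$ matches the index of $\Phi_{E_8}$, and applying Proposition \ref{prop:criterion}(2) with $M=2$. Your write-up in fact supplies more detail than the paper does on the determination $M=2$ (the rank-one structure at index $1$ and the failure at index $2$ from \cite{Wan18}), and your weight bookkeeping $k_0=52$, $\hat k_0=-120$, $g\in M_{172}(\SL_2(\ZZ))$ is accurate.
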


Recall that $A_i^{E_8}$ and $B_j^{E_8}$ have Fourier expansions $1+O(q)$. By definition,  $J(A_i^{E_8},B_j^{E_8})=O(q^8)$. Thus $g=O(q^8)$, which implies that $g\in \Delta^8 \cdot M_{76}(\SL_2(\ZZ))$. Therefore $g$ is the product of $\Delta^8E_4$ and a modular form of weight $72$. It is hard to determine $g$ explicitly because these forms have unwieldy Fourier expansions. We expect that $g$ equals $\Delta^{14}E_4$ up to a constant.

It is possible to choose generators of lower weights instead of $A_i^{E_8}$ and $B_j^{E_8}$, in which case the corresponding modular form $g$ has smaller weight. By the structure results of $W(E_8)$-invariant weak Jacobi forms of small index obtained in \cite{Wan18}, the generators of the smallest possible weights should be chosen as weak Jacobi forms of weights $k_j$ and indices $m_j$ for $(k_j,m_j)=(4,1)$, $(-4,2)$, $(-2,2)$, $(-8,3)$, $(-6,3)$, $(-16,4)$, $(-14,4)$, $(-18,5)$, $(-26,6)$. If such generators exist, then the corresponding $g$ has weight 38. Thus the smallest possible weight of $g$ is 38. But we do not know if such forms of indices 5 and 6 exist. We only constructed a weak Jacobi form of weight $-16$ and index $5$ and a  weak Jacobi form of weight $-24$ and index $6$ in \cite{Wan18}. Thus the modular form $g$ that we can obtain at present has minimal weight $42$. That $g$ is the product of $E_4^3E_6$ and a modular form of weight 24 by the equality in the proof of \cite[Theorem 5.8]{Wan18}.

We do not know whether it can be inferred from Theorem \ref{th:E8} that $J_{*,E_8,*}^{\w,W(E_8)}$ is finitely generated over $M_*(\SL_2(\ZZ))$. More generally, we would like to ask the following questions.
\begin{enumerate}
\item Is $J_{*,L,*}^{\w,G}$ always finitely generated over $M_*(\SL_2(\ZZ))$?
\item Are there other lattices $L$ and subgroups $G$ of $\Orth(L)$ such that $J_{*,L,*}^{\w,G}$ is freely generated over $M_*(\SL_2(\ZZ))$?
\end{enumerate}

\bigskip

\noindent
\textbf{Acknowledgements} 
This work was inspired by a communication with Riccardo Salvati Manni. The author would like to thank him for helpful discussions. The author also thanks Valery Gritsenko for a careful reading of this paper and suggestions of improvement.
The author is grateful to Max Planck Institute for Mathematics in Bonn for its hospitality and financial support. The author also thanks the referee for his/her careful reading and useful comments.

\bibliographystyle{plainnat}

\end{document}